\documentclass[10pt]{article}
\textwidth= 5.00in
\textheight= 7.4in
\topmargin = 30pt
\evensidemargin=0pt
\oddsidemargin=55pt
\headsep=17pt
\parskip=.5pt
\parindent=12pt
\font\smallit=cmti10

\usepackage{amssymb,latexsym,amsmath,epsfig,amsthm} %% Add other packages as necessary
\usepackage{amsmath,amssymb,amsbsy,amsfonts,amsthm,latexsym,
         amsopn,amstext,amsxtra,euscript,amscd,amsthm, mathtools}
\usepackage[utf8]{inputenc}

\makeatletter

\renewcommand\section{\@startsection {section}{1}{\z@}
{-30pt \@plus -1ex \@minus -.2ex}
{2.3ex \@plus.2ex}
{\normalfont\normalsize\bfseries\boldmath}}

\renewcommand\subsection{\@startsection{subsection}{2}{\z@}
{-3.25ex\@plus -1ex \@minus -.2ex}
{1.5ex \@plus .2ex}
{\normalfont\normalsize\bfseries\boldmath}}

\renewcommand{\@seccntformat}[1]{\csname the#1\endcsname. }

\makeatother

\newtheorem{lem}{Lemma}

\newtheorem{prop}{Proposition}

\newtheorem{thm}{Theorem}

\newtheorem{rem}{Remark}
\newtheorem{remark}[rem]{Remark}

%% add any other theorem environments you will used

%% DEFINITIONS

\def\N{{\mathbb N}}

%% COMMANDS

\newcommand{\bfm}{\mathbf{m}}
\newcommand{\multim}{\binom{m}{m_1,\ldots,m_k}}

\begin{document}

\begin{center}
\uppercase{\bf Repetitions of multinomial coefficients and a generalization of Singmaster's conjecture}
\vskip 20pt
{\bf Jean-Marie de Koninck
% \footnote{any footnote here}
}\\
{\smallit D\'epartement de math\'ematiques, Universit\'e Laval, Qu\'ebec,
%, Qu\'ebec G1V 0A6
Canada}\\
{\tt jmdk@mat.ulaval.ca}\\
\vskip 10pt
{\bf Nicolas Doyon
% \footnote{any footnote here}
}\\
{\smallit D\'epartement de math\'ematiques, Universit\'e Laval, Qu\'ebec,
%, Qu\'ebec G1V 0A6
Canada}\\
{\tt Nicolas.Doyon@mat.ulaval.ca}\\
\vskip 10pt
{\bf William Verreault
% \footnote{any footnote here}
}\\
{\smallit D\'epartement de math\'ematiques, Universit\'e Laval, Qu\'ebec,
%, Qu\'ebec G1V 0A6
Canada}\\
{\tt william.verreault.2@ulaval.ca}\\
\end{center}
\vskip 10pt
% \centerline{\smallit Received: , Revised: , Accepted: , Published: } % We will fill in the dates
% \vskip 30pt

\centerline{\bf Abstract}
\noindent
    Given two integers $k\geq 2$ and $a>1$, let $N_k(a)$ stand for the number of multinomial coefficients, with $k$ terms, equal to $a$. 
    We
    study the behavior of $N_k(a)$ and show that its average and normal orders are equal to $k(k-1)$. We also prove that $N_k(a)=O\left((\log a/\log\log a)^{k-1}\right)$ and make several propositions about extreme results regarding large values of $N_k(a)$.

\pagestyle{myheadings}
% \markright{\smalltt INTEGERS: 21 (2021)\hfill}
\thispagestyle{empty}
\baselineskip=12.875pt
\vskip 30pt

\section{Introduction}

Let $N(a)$ denote the number of times the integer $a>1$ occurs as a binomial coefficient, that is, $\displaystyle N(a)=\# \left\{(n,r) \in \N^2 : \binom{n}{r}=a \right\}$. Singmaster \cite{kn:Singmaster-1971} conjectured that $N(a)=O(1)$ and proved that $N(a)=O(\log a)$. Abbott, Erd\H os, and Hanson \cite{kn:Abbott-Erdos-Hanson} later showed that $\displaystyle N(a)=O\left(\frac{\log a}{\log\log a}\right)$ and that its average and normal orders are 2. Finally, Kane \cite{kn:Kane-2004}, \cite{kn:Kane-2007} improved the bounds to $\displaystyle O\left(\frac{\log a\log\log\log a}{(\log\log a)^2}\right)$ and $\displaystyle O\left(\frac{\log a\log\log\log a}{(\log\log a)^3}\right)$, respectively.

Considering the properties of Pascal's triangle, one can reformulate the problem as follows. Is there an upper bound for the number of times a given integer appears in Pascal's triangle?

Despite the fact that infinitely many positive integers appear at least six times in Pascal's triangle \cite{kn:Singmaster-1975}, only nine such integers, namely $$1, 120, 210, 1540, 3003, 7140, 11\,628, 24\,310 \text{ and } 61218182743304701891431482520,$$ have been found to satisfy this property up to $10^{60}$ \cite{kn:OEIS}. It is worth noting that $N(3003)=8$, and that this is the highest known value of $N(a)$. Singmaster \cite{kn:Singmaster-1975} actually conjectured that its maximal value might be $8, 10$ or $12$. 

In what follows, we study repetitions of multinomial coefficients, that is, for a fixed integer $k\geq 2$, those numbers
$\displaystyle\multim=\frac{m!}{m_1!\cdots m_k!}$, where $\displaystyle m=\sum_{i=1}^k m_i$.

Given an integer $k\geq 2$, let $\bfm=(m_1,\ldots,m_k) \in \N^k$, and let
\begin{equation*}
    N_k(a)=\# \left\{\bfm \in \N^k : \multim = a \right\}
\end{equation*}
denote the number of times the integer $a>1$ occurs as a multinomial coefficient. As such, $N_k(a)$ is a generalized version of $N(a)$ where we consider multinomial coefficients of $k$ terms equal to $a$, and in particular, $N(a)=N_2(a)$. 

If one considers Pascal's $k$-simplices as a generalization of Pascal's triangle in dimension $k$, where the entries of the simplex are given by multinomial coefficients of $k$ terms, then the problem is equivalent to asking whether there is an upper bound on the number of times an integer appears in Pascal's $k$-simplex for a given $k\geq 2$.

A simple program tells us that in the first $1000$ lines of Pascal's triangle, $494$ integers appear once, $248\,861$ twice, $5$ three times, $63$ four times and $3$ six times. In comparison, Pascal's $3$-simplex, often called Pascal's pyramid, has $445\,666$ distinct integers in its first $250$ layers, of which $429\,135$ appear exactly six times. These results were to be expected, as we will see.

Obviously, it may be the case that for any fixed value of $k\geq 2$, $N_k(a)$ is bounded. We will also pursue this investigation for large values of $k$.

\section{Main results}

We start by showing that the average and normal orders of $N_k(a)$ are both $k(k-1)$. Intuitively, the average order of $N_k(a)$ is another arithmetic function which takes the same values on average, while its normal order is an arithmetic function which is close to $N_k(a)$ for almost all values of $a$. Consequently, our first theorem implies that for any given integer $a > 1$, there are almost always $k(k-1)$ multinomial coefficients with $k$ terms equal to $a$. An immediate corollary is that almost always one can find any given integer a total of $k(k-1)$ times in Pascal's $k$-simplex.

\begin{remark} \label{rem:1}
    A multinomial coefficient of the form $\displaystyle \binom{a}{a-1,1,0,\ldots,0}$ is always equal to $a$. As such, one gets $\displaystyle N_k(a) \geq k(k-1)$ for all $k \geq 2$. Now notice that $a=1$ and $a=2$ are the only exceptions to this, and while $1$ appears infinitely many times, 2 occurs $\displaystyle \binom{k}{2}$ times as a multinomial coefficient, for it must be of the form $\displaystyle \binom{2}{1,1,0,\ldots,0}$.
\end{remark}

\begin{thm} \label{thm:}
    Let $k\geq 2$ be a fixed integer and $N_k(a)$ be defined as above. Then, the average and normal orders of $N_k(a)$ are both $k(k-1)$.
\end{thm}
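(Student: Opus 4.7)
The plan is to reduce both statements to a single counting estimate on non-trivial configurations. By Remark~\ref{rem:1}, $N_k(a)\ge k(k-1)$ for every $a\ge 3$, so the difference $N_k(a)-k(k-1)$ is non-negative on $\{a\ge 3\}$. I would interpret
$$
S_k(x):=\sum_{a\le x}N_k(a)=\#\{\bfm\in\N^k:2\le\multim\le x\},
$$
and isolate from this count the $k(k-1)$ permutations of $(a-1,1,0,\ldots,0)$ for each integer $a\in\{3,\ldots,\lfloor x\rfloor\}$, together with the $\binom{k}{2}$ permutations of $(1,1,0,\ldots,0)$ corresponding to $a=2$. Denoting by $T_k(x)$ the count of the remaining \emph{non-trivial} tuples, this gives
$$
S_k(x)=k(k-1)\,x+T_k(x)+O_k(1).
$$
If $T_k(x)=o(x)$, the average order is immediate on dividing by $x$, and Markov's inequality combined with $N_k(a)-k(k-1)\ge 0$ yields
$$
\#\{a\le x:N_k(a)>k(k-1)\}\le\sum_{3\le a\le x}\bigl(N_k(a)-k(k-1)\bigr)=T_k(x)+O_k(1)=o(x),
$$
which is the normal order.

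The central step is therefore to bound $T_k(x)$, for which I would first establish the pointwise inequality
$$
\multim\ge\frac{m(m-1)}{2}\qquad\text{for every non-trivial }\bfm\text{ with }m:=m_1+\cdots+m_k\ge 3.
$$
A short case analysis on the number $\ell\ge 2$ of non-zero entries of $\bfm$ suffices: for $\ell=2$ with both entries $\ge 2$, the multinomial is a binomial $\binom{m}{r}$ with $2\le r\le m-2$, hence at least $\binom{m}{2}$; for $\ell\ge 3$, a rearrangement argument shows $m_1!\cdots m_\ell!\le(m-\ell+1)!$, so $\multim\ge m!/(m-\ell+1)!=m(m-1)\cdots(m-\ell+2)\ge m(m-1)$. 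Consequently every $\bfm$ contributing to $T_k(x)$ satisfies $m\le\sqrt{2x}+1$.

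It then remains to count the admissible tuples for each fixed $\ell\in\{2,\ldots,k\}$. For $\ell=2$, the $\binom{k}{2}$ choices of two positions, multiplied by the number of ordered pairs $(a,b)$ with $a,b\ge 2$ and $\binom{a+b}{a}\le x$, contribute $\binom{k}{2}\cdot O(\sqrt x)$: for fixed $a$ one has $b\le O\bigl((a!\,x)^{1/a}\bigr)$, and the sum over $a\ge 2$ is dominated by $a=2$. For each $\ell\ge 3$, the sharper inequality $\multim\ge m(m-1)\cdots(m-\ell+2)$ forces $m\le O(x^{1/(\ell-1)})$, and an analogous sub-case analysis on the smallest non-zero parts (typically equal to $1$) contributes $O_k\bigl(x^{1/(\ell-1)}\bigr)$ tuples, up to a power of $\log x$. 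Summing over the $O_k(1)$ possible values of $\ell$ yields $T_k(x)=O_k(\sqrt x)$, which comfortably suffices.

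The main technical obstacle I anticipate is the careful accounting in the $\ell=2$ case across the entire range $2\le r\le n/2$: the moderate-$r$ contributions must be controlled so that the sum $\sum_r(r!\,x)^{1/r}$ indeed collapses to $O(\sqrt x)$ rather than a larger polynomial in $x$. Once that estimate is in hand, the higher-$\ell$ cases reduce to iterating the same binomial counting argument, and the theorem follows.
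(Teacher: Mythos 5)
Your proposal is correct and follows the same overall architecture as the paper's proof: split the tuples into the trivial permutations of $(a-1,1,0,\ldots,0)$, which account for the main term $k(k-1)x$, bound the count of remaining tuples by $O(\sqrt{x})$ up to logarithms, deduce the average order by division, and deduce the normal order from the first moment via Markov's inequality together with the pointwise lower bound $N_k(a)\ge k(k-1)$ for $a\ge 3$. Where you differ is in the counting of non-trivial tuples: the paper sorts the coordinates, observes that each of the $k-1$ smallest satisfies $\binom{2m_j}{m_j}\le a$ and hence $m_j=O(\log M)$ while $m=O(M^{1/2})$ from $\multim\ge\binom{m}{2}$, and notes that $m_k$ is then determined, giving $O(M^{1/2}(\log M)^{k-1})$ directly. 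You instead stratify by the number $\ell$ of non-zero parts and use the merging inequality $m_1!\cdots m_\ell!\le(m-\ell+1)!$, which is valid (merge two parts at a time via $a!\,b!\le(a+b-1)!$) and, with the sub-case analysis on the small parts that you sketch, does yield the marginally cleaner bound $T_k(x)=O_k(\sqrt{x})$ with no log factors in the dominant $\ell=2$ term. The paper's version is shorter and avoids the stratification entirely; yours is more delicate in the $\ell=2$ bookkeeping (the sum $\sum_{r\ge 2}(r!\,x)^{1/r}$ being dominated by $r=2$, which does hold since $r=O(\log x)$ and the $r\ge 3$ terms contribute $O(x^{1/3}\log^2 x)$), but either estimate is far stronger than the $o(x)$ actually needed, so both routes close the argument.
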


\begin{proof}
    Let $M$ be a positive integer. We define 
    \begin{align*}
        \mathcal{S}(k,M) &= \left\{\bfm : 2 < \multim \leq M \right\}, \\
        \mathcal{S}_1(k,M) &= \left\{\bfm\in \mathcal{S}(k,M) : m_i=m-1 \text{ for some } i \in \{1,\ldots,k\}\right\}, \\
        \mathcal{S}_2(k,M) &= \mathcal{S}(k,M)\setminus \mathcal{S}_1(k,M).
    \end{align*}
    
    Following Remark \ref{rem:1}, we find that
    \begin{align}
        \sum_{1<a\leq M}N_k(a) 
        &= \binom{k}{2} \: + \sum_{\bfm \in \mathcal{S}(k,M)}1 \label{eq:50} \\
        &= \binom{k}{2} \: + \: \sum_{\bfm \in \mathcal{S}_1(k,M)}1 \quad
        + \sum_{\bfm \in \mathcal{S}_2(k,M)}1. \nonumber
    \end{align}
    Now, since
    \begin{align} 
        \sum_{\bfm \in \mathcal{S}_1(k,M)}1 &= k(k-1)\sum_{2<\binom{m}{1,0,\ldots,0,m-1}\leq M}1 \label{eq:51} \\
        &=k(k-1)\sum_{2 < m\leq M}1 \nonumber\\
        &=k(k-1)(M-2), \nonumber
    \end{align}
    it only remains to consider $\displaystyle\sum_{\bfm \in \mathcal{S}_2(k,M)}1$.
    
    If $\bfm \in \mathcal{S}_2(k,M)$, then 
    $\displaystyle 2 < \multim \leq M$ and $m_i < m-1$ for $i=1,2,\ldots,k$. Assume, without loss of generality, that $m_i \leq m_{i+1}$ for $i=1,2,\ldots,k-1$. 
    Then we have the inequalities
    \begin{equation} \label{eq:4}
      M\geq \multim \geq \frac{m!}{m_j!(m-m_j)!} \geq \binom{2m_j}{m_j} \geq 2^{m_j} \quad (j=1,2,\ldots,k-1)
    \end{equation}
    and
    \begin{equation*}
      M\geq \multim \geq \frac{m!}{m_k!(m-m_k)!} \geq \binom{m}{2}=\frac{m(m-1)}{2},
    \end{equation*}
    which implies that $m_j=O(\log M)$ and $m=O(M^{1/2})$, respectively.
    
    Since the value of $m_k$ is entirely determined by that of $m,m_1,m_2,\ldots,m_{k-1}$, we get
    \begin{equation} \label{eq:52}
        \sum_{\bfm \in \mathcal{S}_2(k,M)}1=\sum_{\substack{2<\multim\leq M \\ 0\leq m_i < m-1}}1
        = \sum_{\substack{m_i \\ 1\leq i \leq k-1}}\sum_{m}1 = O\left(M^{1/2}\left(\log M\right)^{k-1}\right).
    \end{equation}

    Using \eqref{eq:51} and \eqref{eq:52} in \eqref{eq:50}, we obtain
    \begin{equation} \label{eq:1}
        \sum_{1<a\leq M}N_k(a)=\binom{k}{2}+k(k-1)(M-2)+O\left(M^{1/2}\left(\log M\right)^{k-1}\right),
    \end{equation}
    and therefore,
    \begin{align*}
        \lim_{M\to\infty} \frac{1}{M} &\sum_{1<a\leq M}N_k(a) \\
        &= \lim_{M\to\infty} \frac{1}{M} \left(\binom{k}{2}+k(k-1)(M-2) + O(M^{1/2}(\log M)^{k-1})\right) \\
        &=k(k-1),
    \end{align*}
        which provides the average order of $N_k(a)$.
         
        For the normal order, let
    \begin{align*}
        f_k(M) &= \# \left\{1<a\leq M : N_k(a) < k(k-1)\right\}, \\
        g_k(M) &= \# \left\{1<a\leq M : N_k(a) = k(k-1)\right\}, \\
        h_k(M) &= \# \left\{1<a\leq M : N_k(a) > k(k-1)\right\}.
    \end{align*}
    We wish to prove that $\displaystyle N_k(a)-k(k-1) \leq \varepsilon k(k-1)$ for all $\varepsilon >0$ and for almost all $a$. It should be clear that this inequality holds only if  $N_k(a)=k(k-1)$, for $N_k(a)$ only takes integral values. As such, it suffices to show that $f_k(M)+h_k(M)= o(M)$ as $M\to\infty$.
    
    Now Remark \ref{rem:1} implies that $f_k(M)=1$. Also, $g_k(M) + h_k(M) = M-2$ since they only exclude $a=1$ and $a=2$. Thus,
    \begin{align}
        \sum_{1<a\leq M}N_k(a) &\geq f_k(M) + k(k-1)g_k(M) + (k(k-1)+1)h_k(M) \nonumber \\
        &= 1 + k(k-1)(g_k(M)+h_k(M)) + h_k(M) \nonumber \\
        &= 1 + k(k-1)(M-2) + h_k(M). \label{eq:2}
    \end{align}
    Comparing \eqref{eq:2} with \eqref{eq:1}, we get that $h_k(M)=O\left(M^{1/2}(\log M)^{k-1}\right)$, which means that $f_k(M)+h_k(M)=o(M)$ as $M\to\infty$ and in turn implies that the normal order of $N_k(a)$ is indeed $k(k-1)$. \qedhere
\end{proof}

We also state an additional result, even though it is weaker than our claim  
$$N_k(a)=O\left(\left(\frac{\log a}{\log\log a}\right)^{k-1}\right),$$ 
be it simply for the fact that our proof is both more general and simpler than that of Singmaster in \cite{kn:Singmaster-1971}.

\begin{prop} \label{thm:2}
    Let $k\geq 2$ be a fixed integer. Then $\displaystyle N_k(a)=O\left(\log^{k-1} a\right).$
\end{prop}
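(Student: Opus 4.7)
The plan is to reuse the counting idea already appearing in the proof of Theorem \ref{thm:}: after ordering the coordinates, the $k-1$ smallest entries are bounded logarithmically in $a$, and once they are fixed the value of $m$ is essentially uniquely determined. First I would assume $m_1 \le m_2 \le \cdots \le m_k$, paying a factor of at most $k!$ in the final count. From $m_{k-1} \le m_k$ and $m_{k-1} + m_k \le m$ we get $m_{k-1} \le m/2$, and therefore $m_j \le m/2$ for every $j \le k-1$. The chain of inequalities used in \eqref{eq:4} then gives
\[
a = \multim \ge \binom{m}{m_j} \ge 2^{m_j},
\]
so $m_j \le \log_2 a$ for $j=1,\ldots,k-1$.

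Next, I would show that fixing $(m_1,\ldots,m_{k-1})$ pins down $m$. Setting $s = m_1+\cdots+m_{k-1}$ and $m_k = m-s$, the multinomial becomes
\[
f(m) = \frac{m!}{m_1!\cdots m_{k-1}!\,(m-s)!},
\]
and an elementary computation gives $f(m+1)/f(m) = (m+1)/(m+1-s)$, which is strictly greater than $1$ whenever $s \ge 1$. Hence $f$ is strictly increasing on $m \ge s$, so at most one value of $m$ satisfies $f(m)=a$. The degenerate case $s=0$ would force $m_i=0$ for every $i<k$ and give $f \equiv 1 \ne a$, so it contributes nothing.

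Combining the two observations, the number of admissible ordered tuples $(m_1,\ldots,m_{k-1})$ is at most $(\lfloor \log_2 a \rfloor + 1)^{k-1}$, and each is matched to at most one $m$, so
\[
N_k(a) \le k!\,(\lfloor \log_2 a \rfloor + 1)^{k-1} = O\bigl(\log^{k-1} a\bigr).
\]
The only step beyond routine bookkeeping is the strict monotonicity of $f(m)$ in $m$ once the smaller coordinates are frozen; this is what I would regard as the crux, but the one-line ratio computation above disposes of it, so no serious obstacle is expected.
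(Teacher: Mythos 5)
Your proposal is correct and follows essentially the same route as the paper: bound each of the $k-1$ smallest parameters by $\log_2 a$ via $a \ge \binom{2m_j}{m_j} \ge 2^{m_j}$, then use strict monotonicity of the multinomial coefficient in the remaining parameter to conclude that each choice of $(m_1,\ldots,m_{k-1})$ yields at most one solution. Your treatment is in fact slightly more careful than the paper's, since you make the $k!$ reordering factor and the degenerate case $s=0$ explicit.
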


\begin{proof}
    Since
    \begin{equation*} 
        \frac{\binom{m+1}{m_1,\ldots,m_j+1,\ldots,m_k}}{\multim} 
        = \frac{\frac{m+1}{m_j+1}\multim}{\multim}
        = \frac{m+1}{m_j+1}\geq 1,
    \end{equation*}
    the multinomial coefficients are strictly increasing in $m_j$ for all $1\leq j \leq k$, which implies 
    that if we fix $k-1$ parameters in the multinomial coefficient, say all but $m_i$, then
    \begin{equation} \label{eq:20}
        \# \left\{m_i : \multim = a\right\} \leq 1. 
    \end{equation}
    
    The proof is based on the observation that, for all $1\leq i\leq k$,
    \begin{equation*}
        N_k(a)=\sum_{\substack{m_j \\ j \: \in \: \{1,\ldots,k\} \setminus \{i\}}}\# \left\{m_i : \multim = a \right\}, 
    \end{equation*}
    so that in particular,
    \begin{equation*}
        N_k(a)=\sum_{\substack{m_j \\ 1\leq j \leq k-1  }}\# \left\{m_k : \multim = a \right\} 
        \leq \sum_{\substack{m_j \\ 1\leq j \leq k-1  }}1
        =O(\log^{k-1} a),
    \end{equation*} 
   where we used \eqref{eq:20} and \eqref{eq:4}.
\end{proof}

\begin{remark}
  One may be tempted to write $N_k(a)$ as a sum over less than $k-1$ fixed parameters, but then it becomes quite a tedious task to determine the number of multinomial coefficients equal to $a$. This is why we should be satisfied with this bound for now.
\end{remark}

Next, we generalize a result of Abbott, Erd\H os, and Hanson \cite{kn:Abbott-Erdos-Hanson}, obtaining a better upper bound for $N_k(a)$. We will need two lemmas. 

\begin{lem}[Baker, Harman, and Pintz \cite{kn:Baker-Harman-Pintz}]  \label{lem:2}
    There exists a real number $x_0$ such that the interval $\displaystyle \left[x,x+x^{0.525}\right]$ contains at least one prime number for all $x>x_0$.
\end{lem}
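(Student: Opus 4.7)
The statement is cited verbatim from Baker, Harman, and Pintz \cite{kn:Baker-Harman-Pintz}, so in the present paper no argument is required beyond invoking the reference. If I had to reconstruct the proof, here is the shape I would follow.

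The plan is to reduce the claim to a lower bound of the form $\pi(x+y)-\pi(x) \gg y/\log x$ with $y=x^{0.525}$. The natural tool is a combinatorial sieve identity that decomposes the characteristic function of the primes in $[x,x+y]$ into Type~I sums $\sum_{d\leq D} a_d \sum_{x/d<n\leq (x+y)/d} 1$ and Type~II bilinear sums $\sum_{mn\in(x,x+y]} a_m b_n$ with $m$ confined to an intermediate range. Vaughan's or Heath-Brown's identity could serve here, but Harman's sieve is preferable because it retains, rather than discards, the positive contribution arising at each Buchstab iteration, which is what ultimately allows the exponent to be pushed from the earlier value $0.55$ down to $0.525$.

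The analytic input would be a zero-density estimate for $\zeta(s)$: writing $N(\sigma,T)$ for the number of zeros $\beta+i\gamma$ of $\zeta$ with $\beta\geq\sigma$ and $|\gamma|\leq T$, I would need bounds of the form $N(\sigma,T)\ll T^{A(\sigma)(1-\sigma)+\varepsilon}$ with sharp exponents in the range $\sigma\in[3/4,1]$. Combining the best available density estimates of Huxley and Heath-Brown with the Hal\'asz--Montgomery mean value theorem for Dirichlet polynomials controls the Type~I contribution via Perron's formula on a short interval, and handles the Type~II bilinear sums through a large-sieve-type dissection.

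The main obstacle, and indeed the substance of the Baker--Harman--Pintz paper, is numerical rather than conceptual. After the first Buchstab decomposition, ranges of the summation variables fall in a region where neither Type~I nor Type~II estimates apply directly; these must be re-expanded by further iterated Buchstab identities with role reversal between the divisor variables, and a companion numerical optimization then selects all cutoffs so that every surviving piece can be bounded. The exponent $0.525$ emerges from this optimization rather than from any single clean inequality, which is why the only sensible move in a paper of the present scope is to cite the theorem as a black box.
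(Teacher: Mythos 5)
The paper gives no proof of this lemma; it is stated purely as a citation of Baker, Harman, and Pintz, exactly as you note in your first sentence. Your treatment matches the paper's, and your sketch of the underlying sieve argument, while not required here, is a fair description of the actual proof in the cited reference.
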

% The very technical proof of Lemma \ref{lem:2} can be found in \cite{kn:Baker-Harman-Pintz}.

\begin{lem} \label{lem:1}
    For each $j=1,2,\ldots,k$, we have $\displaystyle \multim\geq \left(\frac{m}{m_j}\right)^{m_j}$.
\end{lem}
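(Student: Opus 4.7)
The plan is to reduce the multinomial inequality to a binomial one and then prove the binomial version by a term-by-term comparison.

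First, I would fix $j$ and split the multinomial coefficient as
\begin{equation*}
  \binom{m}{m_1,\ldots,m_k} = \binom{m}{m_j}\binom{m-m_j}{m_1,\ldots,m_{j-1},m_{j+1},\ldots,m_k}.
\end{equation*}
Since the second factor is a multinomial coefficient, it is a positive integer, hence at least $1$. This gives the reduction $\binom{m}{m_1,\ldots,m_k} \geq \binom{m}{m_j}$, and the problem collapses to proving $\binom{m}{m_j}\geq (m/m_j)^{m_j}$.

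Next, I would write the binomial coefficient as a product of $m_j$ factors,
\begin{equation*}
  \binom{m}{m_j}=\prod_{i=0}^{m_j-1}\frac{m-i}{m_j-i},
\end{equation*}
and verify that each factor satisfies $\frac{m-i}{m_j-i}\geq \frac{m}{m_j}$ for $0\leq i\leq m_j-1$. Indeed, cross-multiplying (all quantities are positive since $m\geq m_j>i$) reduces this to $m_j(m-i)\geq m(m_j-i)$, i.e.\ $mi\geq m_ji$, which holds because $m\geq m_j$. Taking the product of these $m_j$ inequalities yields $\binom{m}{m_j}\geq (m/m_j)^{m_j}$, and combining with the reduction step gives the claim.

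There is essentially no obstacle here; the only point requiring a little care is the edge case $m_j=0$, where the bound reads $\binom{m}{m_1,\ldots,m_k}\geq (m/0)^0$ and one must interpret the right-hand side as $1$ (the empty product), which is trivially true. Everything else is elementary arithmetic, so I would keep the write-up short.
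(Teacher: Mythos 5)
Your proof is correct, but it takes a genuinely different route from the paper's. The paper proceeds by induction on $m_j$: the base case $m_j=1$ is checked directly, and the inductive step compares $\binom{m+1}{m_1,\ldots,m_j+1,\ldots,m_k}$ with $\multim$ via the ratio $\frac{m+1}{m_j+1}$, using $\frac{m}{m_j}\geq\frac{m+1}{m_j+1}$ (valid since $m\geq m_j$) to push the bound through. You instead factor the multinomial coefficient as $\binom{m}{m_j}$ times a residual multinomial coefficient, discard the latter as an integer $\geq 1$, and then prove the classical binomial bound $\binom{m}{m_j}\geq (m/m_j)^{m_j}$ by the term-by-term comparison $\frac{m-i}{m_j-i}\geq\frac{m}{m_j}$. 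Your reduction makes explicit that the multinomial inequality is really just the binomial one in disguise, and avoids induction entirely; the paper's induction is equally elementary but keeps the argument internal to the multinomial setting, which matches how the coefficient is manipulated elsewhere in the paper (e.g.\ in Proposition \ref{thm:2}). Your remark on the $m_j=0$ edge case is a point the paper glosses over (its induction starts at $m_j=1$), though in the application (Theorem \ref{thm:3}) only nonzero $m_j$ matter. Both arguments are sound; yours is arguably the shorter write-up.
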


\begin{proof}
    We proceed by induction on $m_j$. 
    At first, if $m_j=1$, then, 
    \begin{align*}
        \multim &= \binom{m_1+\cdots+m_{j-1}+1+m_{j+1}+\cdots+m_k}{m_1,\ldots,m_{j-1},1,m_{j+1}\ldots,m_k} \\
        &= (m_1+\cdots+m_{j-1}+1+m_{j+1}+\cdots+m_k)\binom{m_1+\cdots+m_{j-1}+m_{j+1}+\cdots+m_k}{m_1,\ldots,m_{j-1},m_{j+1},\ldots,m_k} \\
        &\geq (m_1+\cdots+m_{j-1}+1+m_{j+1}+\cdots+m_k).
    \end{align*}
    
    Now,
    \begin{align*}
        \binom{m+1}{m_1,\ldots,m_j+1,\ldots,m_k} &= \frac{m+1}{m_j+1}\multim \\
        &\geq \frac{m+1}{m_j+1} \left(\frac{m}{m_j}\right)^{m_j} \\
        &\geq \frac{m+1}{m_j+1} \left(\frac{m+1}{m_j+1}\right)^{m_j} \\
        & = \left(\frac{m+1}{m_j+1}\right)^{m_j+1},
    \end{align*}
    where the second to last inequality holds since $m\geq m_j$.
\end{proof}

\begin{thm} \label{thm:3}
    Let $k\geq 2$ be a fixed integer. Then, $\displaystyle N_k(a)=O\left(\left(\frac{\log a}{\log\log a}\right)^{k-1}\right).$
\end{thm}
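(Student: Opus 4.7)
The plan is to generalize the result of Abbott, Erd\H os, and Hanson \cite{kn:Abbott-Erdos-Hanson} (who treated the binomial case $k=2$) to arbitrary $k\geq 2$. I follow the same broad strategy as in Proposition \ref{thm:2}: arrange the coordinates so that $m_1\leq m_2\leq\cdots\leq m_k$, and count admissible tuples $(m_1,\ldots,m_{k-1})$, relying on the strict monotonicity of the multinomial coefficient in each argument to pin $m_k$ down uniquely. The ordering forces $m_k\geq m/k$, and Lemma \ref{lem:1} gives, for every $j\leq k-1$,
\[
    m_j\log(m/m_j)\leq \log a.
\]

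Fix a small $\delta\in(0,1)$. If $m/m_j\geq (\log a)^{\delta}$, the above estimate immediately yields $m_j\leq \delta^{-1}\log a/\log\log a$. Whenever every $j\leq k-1$ lies in this favourable regime, the number of admissible tuples is at most $(\delta^{-1}\log a/\log\log a)^{k-1}$, which already matches the target bound. It therefore remains to handle the \emph{exceptional} configurations, in which $m/m_j<(\log a)^{\delta}$ for some $j$; by the ordering this necessarily means $m/m_{k-1}<(\log a)^{\delta}$, so $m\leq m_{k-1}(\log a)^{\delta}$, and combining with the trivial $m_{k-1}\leq\log_2 a$ gives $m=O((\log a)^{1+\delta})$.

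To control the exceptional contribution, I would invoke Lemma \ref{lem:2} to produce a prime $p\in(m-m^{0.525},m]$. A direct Kummer-theoretic calculation (using $p\leq m<2p$ and $m_i<p$ for every $i$) shows that $v_p\left(\multim\right)=1$, so $p$ is an exact prime divisor of $a$. Hence $p$ belongs to the $\omega(a)=O(\log a/\log\log a)$-element set of prime divisors of $a$, and for each such $p$ the parameter $m$ is confined to a window of length $O(m^{0.525})$ above $p$. Translating these arithmetic restrictions into an upper bound on the remaining tuples, by combining the carry-structure information with the monotonicity argument applied to $(m_1,\ldots,m_{k-2})$, yields that the exceptional tuples contribute at most $O((\log a/\log\log a)^{k-1})$ as well.

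The main obstacle lies in this final count: for $k\geq 3$ the $p$-adic carry structure of the $k$-fold addition $m_1+\cdots+m_k=m$ is significantly more intricate than in the binomial case of Abbott, Erd\H os, and Hanson, and a naive product of restrictions loses a power of $\log a$. A convenient way to avoid this loss is to iterate the factorization $\multim=\binom{m}{m_k}\binom{m-m_k}{m_1,\ldots,m_{k-1}}$ and proceed by induction on $k$, using the binomial estimate $N_2(a)=O(\log a/\log\log a)$ of Abbott, Erd\H os, and Hanson as the base case and invoking the prime from Lemma \ref{lem:2} at each step to provide the gain of $\log\log a$ in the denominator.
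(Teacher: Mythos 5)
Your treatment of the favourable regime is correct and is essentially the paper's argument for its set $A$: the same threshold $m\approx(\log a)^{1+\delta}$ (the paper fixes $\delta=1/21$, i.e.\ splits at $m>\log^{22/21}a$), the same combination of Lemma \ref{lem:1} with the trivial bound $m_j\le \log_2 a$ to force $m_j=O(\log a/\log\log a)$ for $j\le k-1$, and the same use of monotonicity to pin down $m_k$, giving \eqref{eq:8}. The problem is the exceptional (small-$m$) case, and the gap there is genuine: you flag it yourself (``the main obstacle lies in this final count''), and neither of the two routes you sketch is carried out. In the Kummer route, the claim that $m_i<p$ for every $i$ when $p\in(m-m^{0.525},m]$ is not justified --- the largest part $m_k$ can a priori exceed $m-m^{0.525}$, and excluding this needs a separate argument --- and the decisive step, converting ``$p\mid a$ and $m$ lies in a short window above $p$'' into a count of \emph{tuples} that does not lose a power of $\log a$, is precisely what is asserted rather than proved. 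The proposed induction on $k$ is likewise only a plan, so as written the proposal does not prove the theorem.

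What you missed is that the exceptional set needs no carry analysis, no appeal to $\omega(a)$, and no induction, because it is minuscule: a \emph{single} prime controls all of it. The paper takes $M=M_1+\cdots+M_k$ to be the largest total occurring in $B$ (with $M_1$ the smallest part), derives $a<M^{M-M_1}$ and hence, using $M\le\log^{22/21}a$, the inequality $M_1+M_1^{21/40}<M$; Lemma \ref{lem:2} then yields a prime $P$ with $M_1\le P<M$, which divides $a$. Consequently every total $m$ arising from $B$ satisfies $P\le m\le M$ for the \emph{largest} such prime $P$, and a second application of Lemma \ref{lem:2} shows $M-P\le P^{21/40}<M^{21/40}\le\log^{11/20}a$. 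So the admissible values of $m$ in $B$ lie in an interval of length $O(\log^{11/20}a)$, which is already far below the target bound (and remains so after multiplying by the at most $O(\log^{k-2}a)$ tuples sharing a given $m$, obtained from $m_j\le\log_2 a$ for $j\le k-2$ plus monotonicity). If you replace your entire exceptional-case discussion with this single-prime argument, the proof closes.
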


\begin{proof} We let $m_i \leq m_{i+1}$ for $i=1,2,\ldots,k-1$ without loss of generality.

    Let
        \begin{align*}
        A &= \left\{\bfm \in N_k(a) : m > \log^{\frac{22}{21}} a\right\}, \\
        B &= \left\{\bfm \in N_k(a) : m \leq \log^{\frac{22}{21}} a\right\},
    \end{align*}
    so that $N_k(a)=\#A+\#B$.
    % Note that $N_k(a)\leq k!\# A$. 
    Hence, it is sufficient to show that $\#A$ and $\#B$ are both $$\displaystyle O\left(\left(\frac{\log a}{\log\log a}\right)^{k-1}\right).$$

    First, assume $\bfm$ is in $A$, so that 
    \begin{equation} \label{eq:9}
        m>\log^\frac{22}{21} a.
    \end{equation}
    
    Notice that in \eqref{eq:4}, we have not used the fact that $m_i < m-1$ for all $1\leq i\leq k$. Thus, we can write
    \begin{equation} \label{eq:10}
    m_i\leq \frac{\log a}{\log 2} \qquad (i=1,2,\ldots,k-1).
    \end{equation}

    Using Lemma \ref{lem:1}, \eqref{eq:9} and \eqref{eq:10}, we get
    \begin{equation*}
        m_i \leq \frac{\log a}{\log \frac{m}{m_i}}
        < \frac{\log a}{\log \frac{\log^\frac{22}{21} a}{\log a/\log 2}} 
        =O\left(\frac{\log a}{\log\log a}\right) \qquad (i=1,2,\ldots,k-1).
    \end{equation*}
    
    By monotonicity, for fixed $m_1,\ldots,m_{k-1}$, there is only one $m_k$ that yields a multinomial coefficient equal to $a$. Hence,
    \begin{equation} \label{eq:8}
        \# A=O\left(\left(\frac{\log a}{\log\log a}\right)^{k-1}\right).
    \end{equation}

                Secondly, assume $\bfm$ is in $B$. Let $M=M_1+M_2+\cdots +M_k$ be the greatest element for which the associated $\mathbf{M}=(M_1,\ldots,M_k)$ lies in $B$. We also define $S=M-M_1$.
                Then,
                \begin{equation*}
                    a=\frac{M!}{M_1!\cdots M_k!}<\frac{M!}{(M-S)!}=M(M-1)\cdots (M-S+1)<M^S.
                \end{equation*}
                
                From $\displaystyle M \leq \log^\frac{22}{21} a$ and $a < M^{S}$, we get
                \begin{equation*}
                M < (S\log M)^\frac{22}{21}.
                \end{equation*}
                
                Note that 
                \begin{equation*}
                    \log^\frac{22}{21} M \leq S^\frac{18}{21} \text{ for all }  M\geq 2.
                \end{equation*}
                
                Since $k\geq 2$ and $M_1$ is maximal when all parameters are equal, it is sufficient to show that the result holds for $M_1=M/2$. It is easy to verify that $\log^\frac{22}{21}M\leq (M/2)^\frac{18}{21}$ for all $M\geq 2$.

                Therefore, $\displaystyle M \leq S^{22/21}S^{18/21} < S^{40/21}+ S$ or, equivalently, $\displaystyle (M-S)^{21/40} < S$.
                 Adding $M-S$ on both sides, we finally have 
               $\displaystyle
                    (M-S)^{21/40}+(M-S) < M,
                $
                that is,
                \begin{equation} \label{eq:5}
                    M_1+M_1^{21/40} < M.
                \end{equation}
                
                Using Lemma \ref{lem:2}, there exists a prime in $\displaystyle \left[M_1, M_1+M_1^\frac{21}{40}\right]$. Combining the latter with \eqref{eq:5}, there is a largest prime P satisfying
                \begin{equation*}
                    M_1 \leq P < M,
                \end{equation*}
                which tells us that $P$ divides our fixed $a$, so the same can be said of any $m$ for which the associated $\bfm$ is in $B$. Thus, $P\leq m\leq M$ whenever $m\leq \log^\frac{22}{21} a$.
                As such, there are at most $M-P$ elements in $B$. 
                
                Next, let $x=P$ in Lemma \ref{lem:2} and let $Q$ be the largest prime in the interval $\displaystyle \left[P, P+P^\frac{21}{40}\right]$. Then, since $P$ is the largest prime less than $M$, we must have
                \begin{equation*}
                    P<M\leq Q \leq P+P^{21/40},
                \end{equation*}
                and so it follows that
                \begin{equation*}
                    \# B \leq M-P \leq P^\frac{21}{40} < M^\frac{21}{40} 
                    \leq (\log^\frac{22}{21}a)^\frac{21}{40} = \log^\frac{11}{20} a=O\left(\left(\frac{\log a}{\log\log a}\right)^{k-1}\right),
                \end{equation*}
                which, combined with \eqref{eq:8}, yields the desired result.
\end{proof}

    % Lemma \ref{lem:2} uses the latest result for lower bounds in prime gaps, which yielded $\#B\leq \log^\frac{11}{20} a$ in the proof of Theorem \ref{thm:3}, a much stronger estimate than $O\left((\log a/\log\log a)^{k-1}\right)$. However, it is the case where $m$ is in $A$ that limits us on the upper bound, so it would be useful to get a better inequality than the one in Lemma \ref{lem:1}.
    
    % As it was mentioned in \cite{kn:Abbott-Erdos-Hanson} for $N_2(a)$, it is possible to adapt the proof of Theorem \ref{thm:3} to obtain the better bound $N_k(a)=O((\log a)^{2(k-1)/3+\varepsilon})$, given that we accept Cramér's conjecture, which states that for all $x>x_0$, the interval $\displaystyle \left[x,x+(\log x)^2\right]$ contains at least one prime.
    % One can observe that Firoozbakht's conjecture \cite{kn:Firoozbakht} is even stronger as it would imply that there is at least one prime number in $\displaystyle \left[x,x+(\log x)^2-\log x\right]$ for sufficiently large $x$. It would be interesting to see if this leads to a better estimate of $N_k(a)$.

We end this paper by stating some partial results regarding large values of $N_k(a)$. We will see that we can do much better than the naive bound $N_k(a) \geq k(k-1)$ for all $k\geq 2$.

\begin{prop} \label{prop:2}
For any integer $k>2$,
$$
\#\{a \leq x:N_k(a)\ge k! \}\gg x^{\frac{2}{(k-1)(k-2)}}.
$$
\end{prop}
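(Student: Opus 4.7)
The plan is to exhibit an explicit one-parameter family of tuples $\bfm(m)$ all of whose $k$ coordinates are pairwise distinct; once the coordinates are distinct, the $k!$ permutations of $\bfm(m)$ yield $k!$ different ordered tuples with the same multinomial value, so $N_k(a(m))\ge k!$ is automatic. The task then reduces to verifying that the resulting values $a(m)$ are distinct and counting how many lie below $x$.

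Concretely, for each integer $m>k-2+\binom{k-1}{2}$, I would set
$$\bfm(m)=\left(m-\binom{k-1}{2},\,0,\,1,\,2,\,\ldots,\,k-2\right).$$
The coordinates sum to $m$; they include every value in $\{0,1,\ldots,k-2\}$; and the first coordinate exceeds $k-2$, so all $k$ entries are pairwise distinct. The associated multinomial coefficient
$$a(m):=\binom{m}{\bfm(m)}=\frac{m(m-1)\cdots\bigl(m-\binom{k-1}{2}+1\bigr)}{\prod_{j=0}^{k-2}j!}$$
is a polynomial in $m$ of degree $d:=\binom{k-1}{2}=(k-1)(k-2)/2$ with positive leading coefficient, hence strictly increasing for large $m$; in particular, distinct values of $m$ produce distinct values $a(m)$, each satisfying $N_k(a(m))\ge k!$ by the permutation argument above.

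From here the counting is immediate. Since $a(m)\sim c_k\,m^{d}$ for some constant $c_k>0$, the condition $a(m)\leq x$ holds whenever $m\leq (1+o(1))(x/c_k)^{1/d}$, and hence the number of distinct integers $a\leq x$ produced by the construction is
$$\#\{a\leq x:N_k(a)\ge k!\}\;\gg\;x^{1/d}\;=\;x^{\frac{2}{(k-1)(k-2)}},$$
which is exactly the bound claimed.

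The argument meets no substantive obstacle: once one notices that $k!$ copies come for free as soon as all entries of the tuple are distinct, the only design decision is to keep the $k-1$ "small" entries as tight as possible so that the polynomial $a(m)$ has the lowest possible degree. The choice $\{0,1,\ldots,k-2\}$ is optimal for this, and it is precisely this optimum that pins the exponent at $2/((k-1)(k-2))$, matching the statement.
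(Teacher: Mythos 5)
Your proof is correct and is essentially the paper's own argument: both construct the same one-parameter family (denominator entries $0,1,\ldots,k-2$ together with one large entry), use the $k!$ permutations of the distinct coordinates to get $N_k(a)\ge k!$, and count via the asymptotic $a(m)\sim c_k m^{(k-1)(k-2)/2}$. Your added remark that $a(m)$ is eventually strictly increasing (so the values are distinct) is a small point the paper leaves implicit.
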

\begin{proof}
We will force $a$ to take the form of a multinomial coefficient by counting the integers $a$ that can be written as
\begin{equation} \label{eq:25}
  a=\frac{(m+(k-1)(k-2)/2)!}{0!1!\cdots(k-2)! m!}\leq x,
\end{equation}
which obviously has at least $k!$ representations as a multinomial coefficient once $x$ is big enough, by permuting the terms in the denominator.
We have
$$
\frac{(m+(k-1)(k-2)/2)!}{0!1!\cdots(k-2)! m!}=\prod_{j=1}^{k-2}\frac{1}{j!} \prod_{j=1}^{(k-1)(k-2)/2} (m+j)\sim C_k m^{\frac{(k-1)(k-2)}{2}}
$$
as $m\to\infty$,
where 
$$
C_k=\prod_{j=1}^{k-2}\frac{1}{j!}.
$$
It follows that for $x$ large enough, \eqref{eq:25} holds as long as
$$
(1+\varepsilon)C_km^{(k-1)(k-2)/2}\leq x$$ for any given $\varepsilon>0$, that is
\begin{equation*}
  m\leq \frac{1}{((1+\varepsilon)C_k)^{\frac{2}{(k-1)(k-2)}}}x^{\frac{2}{(k-1)(k-2)}},
\end{equation*}
which proves our claim.
\end{proof}

\begin{prop} \label{prop:3}
For each $k\ge 4$,
$$
\#\{a:N_k(a)\ge 2\cdot k!+k(k-1)\} = \infty.
$$
\end{prop}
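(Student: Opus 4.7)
The plan is to construct an infinite family of integers $a$ each admitting three distinct types of $k$-multinomial representation: two multisets of $k$ pairwise distinct nonnegative parts (contributing $k!$ permutations each) together with the trivial multiset $\{a-1,1,0,\ldots,0\}$ (contributing $k(k-1)$ permutations), jointly giving the required total $2k!+k(k-1)$.

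The algebraic heart of the argument is the elementary identity
\begin{equation*}
\binom{n}{r_1,r_2,r_3,r_4,c_1,\ldots,c_{k-4}}=\binom{n}{r_1-1,r_2-1,r_3+1,r_4+1,c_1,\ldots,c_{k-4}},
\end{equation*}
valid whenever $r_1r_2=(r_3+1)(r_4+1)$, since the ratio of the two sides simplifies to $(r_3+1)(r_4+1)/(r_1r_2)=1$. This Diophantine condition is satisfied by the parametric family $(r_1,r_2,r_3,r_4)=(2,3q,5,q-1)$ for every positive integer $q$, with both sides equalling $6q$.

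I then fix auxiliary parts $c_1,\ldots,c_{k-4}$ (for instance $c_i=i+6$), and check that for all sufficiently large $q$ both multisets $\{2,3q,5,q-1\}\cup\{c_i\}$ and $\{1,3q-1,6,q\}\cup\{c_i\}$ consist of $k$ pairwise distinct positive integers, are mutually distinct as multisets (the former contains $2$, while the latter does not for $q\geq 3$), and both differ from the trivial multiset $\{a-1,1,0,\ldots,0\}$ (which contains zero entries while the others do not). Each of the two distinct-parts multisets thus contributes $k!$ representations of the common value $a_q=\binom{n_q}{2,3q,5,q-1,c_1,\ldots,c_{k-4}}$ (with $n_q$ the common sum of the parts), and the trivial representation adds $k(k-1)$ more, so $N_k(a_q)\geq 2k!+k(k-1)$. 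Since $a_q\to\infty$ as $q\to\infty$, the family yields infinitely many distinct integers with the required property. The main obstacle is spotting the identity above; once it is in hand, only routine verifications of the distinctness conditions for $q$ large remain.
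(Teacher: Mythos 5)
Your proof is correct, and while it follows the same counting skeleton as the paper's (two representations by multisets of $k$ pairwise distinct parts, each contributing $k!$ ordered tuples, plus the trivial multiset $\{a-1,1,0,\ldots,0\}$ contributing $k(k-1)$ more), the engine producing the coincidence is genuinely different. The paper relies on the single numerical identity $12!\,2!\,1!=11!\,4!\,0!$ and obtains infinitude by appending the growing tail part $(m-15)!$, so that the two base multisets are fixed except for one varying entry. You instead exhibit a one-parameter family of identities via the general criterion that $\binom{n}{r_1,r_2,r_3,r_4,\ldots}=\binom{n}{r_1-1,r_2-1,r_3+1,r_4+1,\ldots}$ exactly when $r_1r_2=(r_3+1)(r_4+1)$ (the sum of the parts being automatically preserved), realized by $(2,3q,5,q-1)$ versus $(1,3q-1,6,q)$; infinitude then comes from letting $q\to\infty$ while the auxiliary parts $c_i$ stay fixed. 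Your verifications check out: the condition $2\cdot 3q=6\cdot q$ holds identically, both multisets consist of distinct positive parts for $q$ large (avoiding the finitely many collisions such as $q=6$ and collisions with the $c_i$), they differ from each other (one contains $2$) and from the trivial multiset (which contains a $0$), and $a_q\to\infty$. What your approach buys is a systematic, parametric source of equal multinomial coefficients that could in principle be tuned further (e.g., to seek more than two coinciding distinct-part representations), whereas the paper's ad hoc identity is shorter to state but offers no such flexibility; both arguments intrinsically require $k\geq 4$, since four parts must be available to shift.
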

Note that this proposition also works for $k=2$ since infinitely many positive integers appear at least six times in Pascal's triangle. 

\begin{proof}
Our proof follows from the simple identity 
\begin{equation}\label{eq1}
12!2!1!=11!4!0!,
\end{equation}
from which we have, for any $m\ge 15$,
$$
a(m):=\frac{m!}{12!2!1!(m-15)!}=\frac{m!}{11!4!0!(m-15)!}.
$$
If $m\ge 28$, all the integers appearing in the denominator of the above equation are distinct, thus providing $2\cdot 4!=48$ distinct vectors. To obtain the remaining $4\cdot 3=12$ vectors, we just have to consider the expression
$$
\frac{a(m)!}{(a(m)-1)!1!0!0!},
$$
thus completing the proof of the proposition when $k=4$.
This idea can easily be generalized for $k>4$ by observing that
\begin{equation*}
\frac{(m+m_5+m_6+\cdots + m_k)!}{12!2!1!(m-15)!m_5!m_6!\cdots m_k!}=\frac{(m+m_5+m_6+\cdots m_k)!}{11!4!0!(m-15)!m_5!m_6!\cdots m_k!}. \qedhere
\end{equation*}
\end{proof}

\begin{prop} \label{prop:4}
For each $k\ge 7$,
$$
\#\{a:N_k(a)\ge 7\cdot k!/2 +k(k-1)\}=\infty.
$$
\end{prop}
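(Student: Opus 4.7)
The idea is to extend the construction of Proposition~3 by combining its identity with two further factorial identities, so as to obtain five different multinomial decompositions of the same integer instead of two. Besides $12!\cdot 2!\cdot 1! = 11!\cdot 4!\cdot 0!$ from Proposition~3, we use
$$10!\cdot 3!\cdot 2! = 9!\cdot 5!\cdot 1! \qquad \text{and} \qquad 5!\cdot 4!\cdot 1! = 6!\cdot 2!\cdot 2!,$$
which follow at once from the numerical relations $10\cdot 12 = 5!$ and $5!\cdot 4! = 6!\cdot 4$. The base multiset $\{12,10,3,2,2,1\}$ (of sum $30$ and factorial product $12!\cdot 10!\cdot 4!$) is chosen precisely so that each of the first two identities applies directly to it and the third identity applies to what one obtains after using both of the first two.

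Given $k \ge 7$, choose distinct integers $m_7 < m_8 < \cdots < m_k$ all greater than $12$, set $M = 30 + m_7 + \cdots + m_k$, and consider
$$a = \binom{M}{12,10,3,2,2,1,m_7,\ldots,m_k}.$$
Applying the three identities in various combinations shows that $a$ also equals the multinomial coefficient obtained by replacing the base multiset with any of $\{11,10,4,3,2,0\}$, $\{12,9,5,2,1,1\}$, $\{11,9,5,4,1,0\}$, or $\{11,9,6,2,2,0\}$ (with the same padding $m_7,\ldots,m_k$). Since the padding entries are pairwise distinct and exceed every base entry, the two multisets $\{11,10,4,3,2,0\}$ and $\{11,9,5,4,1,0\}$ yield $k$-tuples with all entries distinct and thus contribute $k!$ permutations each, while the remaining three multisets $\{12,10,3,2,2,1\}$, $\{12,9,5,2,1,1\}$, $\{11,9,6,2,2,0\}$ each contain exactly one repeated pair (two $2$'s, two $1$'s, and two $2$'s, respectively) and therefore contribute $k!/2$ permutations each. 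In total this gives $2\cdot k! + 3\cdot k!/2 = 7k!/2$ tuples representing $a$.

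Adjoining the $k(k-1)$ tuples from the trivial representation $a = \binom{a}{a-1,1,0,\ldots,0}$ and its permutations (these are disjoint from the previous ones as soon as $a$ is large enough that $a-1$ exceeds every entry of the base multisets, which is automatic for large padding) yields $N_k(a) \ge 7k!/2 + k(k-1)$. Since the tuple $(m_7,\ldots,m_k)$ can be chosen in infinitely many ways and distinct choices produce distinct values of $a$, the proposition follows. The key routine step is to verify that each of the five multisets indeed has factorial product $12!\cdot 10!\cdot 4!$; this is immediate from the three identities. The only mild subtlety is ensuring the padding is large enough that no $m_i$ coincides with any of $\{0,1,2,3,4,5,6,9,10,11,12\}$, so that the stated permutation counts are exact.
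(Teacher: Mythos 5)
Your proof is correct, and it takes a genuinely different route from the paper's. The paper works with the base multiset $\{24,12,5,3,2,1\}$ (sum $47$) and the two identities $12!\,2!\,1!=11!\,4!\,0!$ and $24!\,5!\,3!\,1!=23!\,6!\,4!\,0!$, obtaining its fourth decomposition by applying both at once; since both identities consume the single entry $1$, the resulting denominator $23!\,6!\,4!\,0!\,11!\,4!\,0!$ corresponds to parts summing to $48$ rather than $47$, so that step requires repair. Your base multiset $\{12,10,3,2,2,1\}$ (sum $30$) is chosen precisely so that the identities $12!\,2!\,1!=11!\,4!\,0!$ and $10!\,3!\,2!=9!\,5!\,1!$ act on disjoint sub-multisets (the two $2$'s allow both to fire simultaneously), and the third identity $5!\,4!\,1!=6!\,2!\,2!$ then supplies a fifth decomposition. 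I verified all three identities, that each of the five multisets has factorial product $12!\cdot 10!\cdot 4!$ and sum $30$, and that the repetition structure (two all-distinct multisets, three with exactly one repeated pair) gives $2\cdot k!+3\cdot k!/2=7k!/2$ ordered tuples, to which the $k(k-1)$ trivial tuples are adjoined disjointly since $a-1$ exceeds every entry. The only loose end is the remark that ``distinct choices produce distinct values of $a$,'' which you have not justified; but it is not needed, since letting $m_k\to\infty$ with the other padding fixed already forces $a\ge\binom{M}{2}\to\infty$ and hence infinitely many distinct values of $a$. Net effect: your construction proves the same bound with a cleaner combinatorial bookkeeping and smaller numbers, at the cost of needing one more factorial identity.
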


\begin{proof}
First observe the identity
\begin{equation}\label{eq2}
24!5!3!1!=23!6!4!0!.
\end{equation}
From equations \eqref{eq1} and \eqref{eq2}, we have
$$
24!5!3!1!12!2!=23!6!4!0!12!2!=24!5!3!11!4!0!=23!6!4!0!11!4!0!,
$$
from which we can deduce that for any $m\ge 47$,
\begin{align*}
&\frac{m!}{24!12!5!3!2!1!(m-47)!}=\frac{m!}{23!12!6!4!2!0!(m-47)!}
=\frac{m!}{24!11!5!4!3!0!(m-47)!} \\
&=\frac{m!}{23!6!4!0!11!4!0!(m-47)!}.
\end{align*}
By choosing $m\ge 72$, we ensure that all the integers appearing in the denominator are distinct, except $0$ in the last denominator, which provides $7\cdot 7!/2=17\,640$ solutions. To obtain the $7\cdot 6=42$ remaining solutions, we set
$$
a(m)=\frac{m!}{24!12!5!3!2!1!(m-47)!}
$$
and consider the quantity
$$
\frac{a(m)!}{(a(m)-1)!1!0!0!0!0!0!}.
$$
This argument can be generalized for $k>7$, by observing that
\begin{eqnarray*}
& &\frac{(m+m_8+m_9+\cdots+m_k)!}{24!12!5!3!2!1!(m-47)!m_8!m_9!\cdots m_k!}\\
&=&\frac{(m+m_8+m_9+\cdots+m_k)!}{23!12!6!4!2!0!(m-47)!m_8!m_9!\cdots m_k!}\\
&=&\frac{(m+m_8+m+m_9+\cdots+ m_k)!}{24!11!5!4!3!0!(m-47)!m_8!m_9!\cdots m_k!}\\
&=&\frac{(m+m_8+m+m_9+\cdots+ m_k)!}{23!6!4!0!11!4!0!(m-47)!m_8!m_9!\cdots m_k!}.
\end{eqnarray*}
From this we have that for $k\ge 7$,
\begin{equation*}
\#\{a:N_k(a)\ge 7\cdot k!/2 +k(k-1)\}=\infty. \qedhere
\end{equation*}
\end{proof}

We also looked numerically for large values of $N_3(a)$ and $N_4(a)$. With $a=2671465728531600$, we found that $N_3(a)\ge 30$ and $N_4(a)\ge 180$. Recall that we expect $N_3(a)=6$ and $N_4(a)=12$. The bound $N_3(a)\ge 30$ comes from the fact that
$$
a=\frac{37!}{7!11!9!}=\frac{38!}{19!11!8!}=\frac{39!}{19!14!6!}=\frac{40!}{19!16!5!}=\frac{a!}{(a-1)!1!0!}.
$$
To prove the bound $N_4(a)\ge 180$, it suffices to observe that
\begin{eqnarray*}
a&=&\frac{37!}{17!11!9!0!}=\frac{38!}{19!11!8!0!}=\frac{39!}{19!14!6!0!}\\
&=&\frac{40!}{19!16!5!0!}=\frac{40!}{20!16!3!1!}=\frac{39!}{21!13!4!1!}=\frac{38!}{22!10!4!2!}=\frac{a!}{(a-1)!1!0!0!}.
\end{eqnarray*}

\section{Concluding remarks}
    
    At this point, it seems natural to ask whether $N_k(a)=O(1)$ for all $k\geq 2$. However, the problem seems just as hard as in the basic case of binomial coefficients. Obviously, as multinomial coefficients can be rewritten as a product of binomial coefficients, it would be particularly interesting to show that proving Singmaster's conjecture is equivalent to proving its generalization in any dimension $k\geq 2$. Perhaps considering symmetries and patterns in Pascal's $k$-simplex would help. For example, setting a parameter to $0$ in $\bfm$ leads us to Pascal's $(k-1)$-simplex, and induction seems to be the way forward.
    
    Our final remark is that there is still a lot we don't know about repetitions of multinomial coefficients. For instance, whether $N_2(a)=5$ or $N_2(a)=7$ is even possible.
    Furthermore, Singmaster \cite{kn:Singmaster-1975} showed that
    \begin{equation*}
        \binom{F_{2i+2}F_{2i+3}}{F_{2i}F_{2i+3}}=\binom{F_{2i+2}F_{2i+3}-1}{F_{2i}F_{2i+3}+1},
    \end{equation*}
    where $F_n$ is the $n$th Fibonacci number, by solving a type of Pell equation. The same result has been proved in \cite{kn:Lind}. This implies that infinitely many integers are going to appear at least six times in Pascal's triangle, as stated in the Introduction.
    Perhaps similar considerations, such as the ones we made in Propositions \ref{prop:2}, \ref{prop:3} and \ref{prop:4}, can be made in higher dimensions to find equal multinomial coefficients, although equations in several variables would have to be solved in a different manner.

\end{document}